\newtheorem{prop}{Proposition}
\newtheorem{lemma}[prop]{Lemma}
\newtheorem{theorem}[prop]{Theorem}
\newtheorem{corollary}[prop]{Corollary}
\theoremstyle{definition}
\newtheorem{remark}[prop]{Remark}
\newtheorem{example}[prop]{Example}
\newcommand{\seqnum}[1]{\href{https://oeis.org/#1}{\rm \underline{#1}}}
\newcommand{\mylabel}[2]{#2\def\@currentlabel{#2}\label{#1}}
\begin{document}
\tikzset{mystyle/.style={matrix of nodes,
        nodes in empty cells,
        row 1/.style={nodes={draw=none}},
        row sep=-\pgflinewidth,
        column sep=-\pgflinewidth,
        nodes={draw,minimum width=1cm,minimum height=1cm,anchor=center}}}
\tikzset{mystyleb/.style={matrix of nodes,
        nodes in empty cells,
        row sep=-\pgflinewidth,
        column sep=-\pgflinewidth,
        nodes={draw,minimum width=1cm,minimum height=1cm,anchor=center}}}

\title{Further Results on Random Walk Labelings}

\author[SELA FRIED]{Sela Fried$^{\dagger}$}
\thanks{$^{\dagger}$ Department of Computer Science, Israel Academic College,
52275 Ramat Gan, Israel.
\\
\href{mailto:friedsela@gmail.com}{\tt friedsela@gmail.com}}
\author[TOUFIK MANSOUR]{Toufik Mansour$^{\sharp}$}
\thanks{$^{\sharp}$ Department of Mathematics, University of Haifa, 3498838 Haifa,
Israel.\\
\href{mailto:tmansour@univ.haifa.ac.il}{\tt tmansour@univ.haifa.ac.il}}

\maketitle

\begin{abstract}
Recently, we initiated the study of random walk labelings of graphs. These are graph labelings that are obtainable by performing a random walk on the graph, such that each vertex is labeled upon its first visit. In this work, we calculate the number of random walk labelings of several natural graph families: The wheel, fan, barbell, lollipop, tadpole, friendship, and snake graphs. Additionally, we prove several combinatorial identities that emerged during the calculations.

\bigskip

\noindent \textbf{Keywords:} Random walk, graph labeling.

\smallskip

\noindent \textbf{Math.~Subj.~Class.:} 05C78, 05A10, 05A15, 05C81.
\end{abstract}

\section{Introduction}
Graph labeling is any assignment of labels, usually integers, to the edges or vertices of a graph. Often, the labelings are required to satisfy a certain property and the main interest is in establishing which graphs admit such labelings. For example, a graph $G$ with $m$ edges is called graceful if there is an injection $f$ from the vertices of $G$ to the set $\{0,1,\ldots,m\}$, such that, when each edge $xy$ is assigned the label $|f(x) -f(y)|$, the resulting edge labels are distinct (e.g., \cite[p.~5]{G}).

Although the literature on graph labelings is vast and many different kinds of labelings have been previously studied (see \cite{G} for a comprehensive literature survey), it seems that our work \cite{Fr} was the first to define and study random walk labelings. Let us recall their definition: Suppose $G$ is a connected and undirected graph with vertex set $V$ and let $n=|V|$. A \emph{random walk labeling} of $G$ is a labeling that is obtainable by performing the following process:
\begin{enumerate}
    \item Set $i=1$ and let $v\in V$. Label $v$ with $i$.
    \item As long as there are vertices that are not labeled, pick $w\in V$ that is adjacent to $v$ and replace $v$ with $w$. If $v$ is not labeled, increase $i$ by $1$ and label $v$ with $i$.
\end{enumerate}

Since every connected and undirected graph $G$ admits at least one random walk labeling, the main interest in this regard is the number of different random walk labelings that are possible for $G$, denoted by $\mathcal{L}(G)$. In \cite{Fr}, upon giving several examples, we calculated the number of random walk labelings of the grid graph and of the king's graph of size $2\times n$. In this work, we pursue the subject further and calculate the number of random walk labelings of several natural graph families, namely, the wheel, fan, barbell, lollipop, tadpole, friendship, and snake graphs. The first two graph families are formed by connecting an additional vertex to all the vertices of a given graph. For this construction, we obtain a general result (Theorem \ref{thm; 922}), from which we deduce the results for the wheel and fan graphs as examples. The next three graph families are special cases of graphs that are formed by joining two graphs with an edge, called bridge. It seems that, in this case, a general result in the spirit of Theorem \ref{thm; 922} would provide little gain, since it would heavily depend on the choice of the bridge vertices. Thus, we treat each of these three graph families separately (Theorems \ref{thm;5ah}, \ref{thm;66} and \ref{thm;7}). The last two graph families both result from connecting copies of the same cycle graph but differ in the way the cycles are connected. They are addressed by Theorems \ref{thm;kji} and \ref{thm;iop}. The work is concluded with proofs of several combinatorial identities that emerged during the calculations.

\section{Main results}

\subsection{Additional vertex connected to all vertices}

Let $m$ and $n$ be two natural numbers to be used throughout this work.

\begin{theorem}\label{thm; 922}
Let $G$ be a connected and undirected graph with vertex set $V$. Let $|V|=n$ and let $G'$ be the graph that is formed by connecting an additional vertex to all the vertices of $G$. For $1\leq k\leq n$, denote by $\mathcal{L}_k(G)$ the number of random walk labelings of $G$ that are disrupted immediately after the $k$th vertex has been labeled. Then
$$\mathcal{L}(G')=\sum_{k=0}^{n}(n-k)!\mathcal{L}_k(G),$$ where we set $\mathcal{L}_0(G) = 1$.
\end{theorem}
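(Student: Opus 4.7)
The plan is to work with the classical reformulation of random walk labelings as \emph{connected orderings}: an ordering $v_1,\dots,v_n$ of the vertices of $G$ is produced by some random walk (labeling each vertex upon its first visit) if and only if, for every $k\in\{1,\dots,n\}$, the set $\{v_1,\dots,v_k\}$ induces a connected subgraph of $G$. One direction is by induction (during any random walk the set of labeled vertices is always connected); the converse is constructive (once $\{v_1,\dots,v_{k-1}\}$ is labeled and induces a connected subgraph, walk from the current vertex through labeled vertices to some neighbor of $v_k$ and step onto $v_k$). Under this identification, $\mathcal{L}_k(G)$ is simply the number of ordered sequences $(v_1,\dots,v_k)$ of $k$ distinct vertices of $G$ such that every initial segment induces a connected subgraph of $G$, and the convention $\mathcal{L}_0(G)=1$ corresponds to the empty sequence.

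Next I would stratify the random walk labelings of $G'$ according to the position of the extra vertex~$u$. Write the ordering as $w_1,\dots,w_{n+1}$ and let $k\in\{0,1,\dots,n\}$ be the unique integer with $w_{k+1}=u$. The key observation is that $u$ is adjacent in $G'$ to \emph{every} vertex of $G$. Consequently:
\begin{itemize}
\item For every $i\le k$, the prefix $\{w_1,\dots,w_i\}$ is contained in $V(G)$, so being connected in $G'$ is the same as being connected in $G$; thus $(w_1,\dots,w_k)$ is precisely a length-$k$ partial random walk labeling of $G$, contributing a factor of $\mathcal{L}_k(G)$.
\item The prefix of length $k+1$ adds $u$, which is adjacent to $w_1,\dots,w_k$ in $G'$, so the connectivity condition is automatic.
\item For every $i>k+1$, the prefix contains $u$, so $u$ is adjacent to any candidate $w_i\in V(G)\setminus\{w_1,\dots,w_{i-1}\}$, and the connectivity condition is again automatic. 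Hence the remaining $n-k$ vertices of $G$ may be listed in \emph{any} order, contributing a factor of $(n-k)!$.
\end{itemize}

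Combining the two factors and summing over the $n+1$ possible positions of $u$ yields
\[
\mathcal{L}(G')\;=\;\sum_{k=0}^{n}(n-k)!\,\mathcal{L}_k(G),
\]
as claimed. The only real content of the argument is the connected-ordering reformulation of $\mathcal{L}(\cdot)$ and the remark that $u$'s universal adjacency trivializes every connectivity constraint once $u$ has appeared; so I do not expect any genuine obstacle, beyond stating the reformulation carefully enough that the boundary cases $k=0$ (the walk starts at $u$, giving $n!$ labelings) and $k=n$ (the walk finishes $G$ before visiting $u$, giving $\mathcal{L}(G)$ labelings) are correctly absorbed into the single sum.
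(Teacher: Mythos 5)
Your proposal is correct and follows essentially the same route as the paper: condition on the position of the universal vertex $u$ in the labeling order, observe that the prefix before $u$ is a disrupted random walk labeling of $G$ counted by $\mathcal{L}_k(G)$, and that after $u$ is labeled the remaining $n-k$ vertices can be taken in any order, giving $(n-k)!$. The only difference is that you make the connected-ordering characterization of random walk labelings explicit, which the paper uses implicitly.
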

\begin{proof}
Let $1\leq k\leq n+1$ and assume that the label of the additional vertex is $k$. We distinguish between two cases:
\begin{enumerate}
    \item [$k=1$:] In this case every vertex of $G$ is reachable at any stage of the labeling and the number of such labelings is obviously given by $n!$.
    \item [$2\leq k\leq n+1$:] In this case, the first $k-1$ labels of $G'$ are within the graph $G$. These labels may be considered as a random walk labeling of $G$ that was disrupted immediately after the $k-1$th vertex has been labeled. The number of such random walk labelings is $\mathcal{L}_{k-1}(G)$. Now, once the additional vertex is labeled, the remaining $n - (k-1)$ vertices of $G$ may be labeled in arbitrary order, yielding $(n - (k-1))!$ possibilities.
\end{enumerate}
We conclude that
\begin{align*}
\mathcal{L}(G')&=n!+\sum_{k=2}^{n+1}(n-(k-1))!\mathcal{L}_{k-1}(G)=\sum_{k=0}^n(n-k)!\mathcal{L}_k(G). \qedhere
\end{align*}
\end{proof}

\begin{example}
\begin{enumerate}
\item [(a)] Recall that the wheel graph on $n+1$ vertices, denoted by $W_{n+1}$, is formed by connecting an additional vertex to all the vertices of the cycle graph $C_n$ (see, for example, \cite[p.~157]{H}. Figure \ref{fig:200} below visualizes $W_9$).

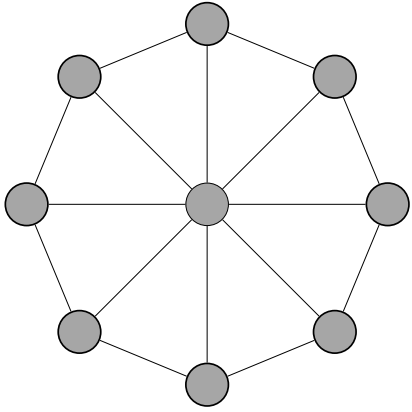
\begin{figure}[H]
\centering
\scalebox{0.8}{
\begin{tikzpicture}[shape = circle, node distance=4cm and 7cm,  nodes={ circle,fill=black!35}]
    \foreach \x in {1,...,8}{%
    \pgfmathparse{(\x-1)*360/8}
    \node[draw,circle,inner sep=0.25cm] (N-\x) at (\pgfmathresult:3cm) [thick] {};
  }
  \node[draw,circle,inner sep=0.25cm] (0,0) (mid) {};
  \path (N-1) edge[ultra thin,-] (N-2);
  \path (N-2) edge[ultra thin,-] (N-3);
  \path (N-3) edge[ultra thin,-] (N-4);
  \path (N-4) edge[ultra thin,-] (N-5);
  \path (N-5) edge[ultra thin,-] (N-6);
  \path (N-6) edge[ultra thin,-] (N-7);
  \path (N-7) edge[ultra thin,-] (N-8);
  \path (N-8) edge[ultra thin,-] (N-1);
  \path (N-1) edge[ultra thin,-] (mid);
  \path (N-2) edge[ultra thin,-] (mid);
  \path (N-3) edge[ultra thin,-] (mid);
  \path (N-4) edge[ultra thin,-] (mid);
  \path (N-5) edge[ultra thin,-] (mid);
  \path (N-6) edge[ultra thin,-] (mid);
  \path (N-7) edge[ultra thin,-] (mid);
  \path (N-8) edge[ultra thin,-] (mid);
\end{tikzpicture}}

\caption{The wheel graph $W_9$.}\label{fig:200}
\end{figure} \noindent We have $$\mathcal{L}(W_{n+1})=n\left((n-1)!-2^{n-2}+2^{n-1}\sum_{k=0}^{n-1}\frac{k!}{2^{k}}\right).$$ Indeed, reasoning as in \cite[Example 1(c)]{Fr}, it is easy to see that, for $1\leq k\leq n-1$, we have $\mathcal{L}_k(C_{n}) = n2^{k-1}$ and $\mathcal{L}_{n}(C_{n}) =\mathcal{L}(C_{n})= n2^{n-2}$. By Theorem \ref{thm; 922},
\begin{align*}
\mathcal{L}(W_{n+1})&=\sum_{k=0}^{n}(n-k)!\mathcal{L}_k(C_n)\\&=
n!+\sum_{k=1}^{n-1}(n-k)!n2^{k-1} + n2^{n-2} \\ &=n\left((n-1)!+\sum_{k=0}^{n-2}(n-1-k)!2^{k}+2^{n-2}\right)\\&=n\left((n-1)!-2^{n-2}+2^{n-1}\sum_{k=0}^{n-1}\frac{k!}{2^{k}}\right).
\end{align*}
In Theorem \ref{thm;ggd} in Section \ref{sec;af1}, we calculate the exponential generating function (egf) of the sequence $2^n\sum_{k=0}^n\frac{k!}{2^{k}}$, which is registered as \seqnum{A233449} in the On-Line Encyclopedia of Integer Sequences \cite{OL}. The expression for $\mathcal{L}(W_{n+1})$ seems to provide the first usage of this sequence.
\item [(b)] Recall that the fan graph on $n+1$ vertices, denoted by $F_{n+1}$, is formed by connecting an additional vertex to all the vertices of $P_n$ (see, for example, \cite[p.~781]{B}. Figure \ref{fig:205} below visualizes $F_6$).
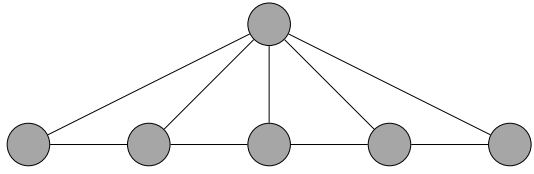
\begin{figure}[H]
\centering
\scalebox{0.8}{
\begin{tikzpicture}[shape = circle, node distance=4cm and 7cm,  nodes={ circle,fill=black!35}]
    \node[draw,circle,inner sep=0.25cm] at (0, 0)   (1) {};
    \node[draw,circle,inner sep=0.25cm] at (-4, -2)   (2) {};
    \node[draw,circle,inner sep=0.25cm] at (-2, -2)   (3) {};
    \node[draw,circle,inner sep=0.25cm] at (0, -2)   (4) {};
    \node[draw,circle,inner sep=0.25cm] at (2, -2)   (5) {};
    \node[draw,circle,inner sep=0.25cm] at (4, -2)   (6) {};
    \path (1) edge[ultra thin,-] (2);
    \path (1) edge[ultra thin,-] (3);
    \path (1) edge[ultra thin,-] (4);
    \path (1) edge[ultra thin,-] (5);
    \path (1) edge[ultra thin,-] (6);
    \path (2) edge[ultra thin,-] (3);
    \path (3) edge[ultra thin,-] (4);
    \path (4) edge[ultra thin,-] (5);
    \path (5) edge[ultra thin,-] (6);
\end{tikzpicture}}
\caption{The fan graph $F_6$.}\label{fig:205}
\end{figure} \noindent We have
$$\mathcal{L}(F_{n+1})=n!+\sum_{k=0}^{n-1}(n-k)!2^{k}.$$
To see that, we shall need to show that, for $1\leq k\leq n$, we have
\begin{equation}\label{eq;009}\mathcal{L}_k(P_n) = (n - k + 1)2^{k-1}.\end{equation}
Indeed, for $1\leq \ell\leq n$, there are $$\sum_{j=\max\left\{ 0,k-1-(n-\ell)\right\} }^{\text{min}\left\{ \ell-1,k-1\right\} }\binom{k-1}{j}$$ random walk labelings that begin at the $\ell$th vertex of $P_n$ (the index $j$ stands for the number of the vertices, left of the $\ell$th vertex, that are labeled by the random walk labeling). Thus,
\begin{align*}
\mathcal{L}_k(P_n) &=\sum_{\ell=1}^n\sum_{j=\max\left\{ 0,k-1-(n-\ell)\right\} }^{\text{min}\left\{ \ell-1,k-1\right\} }\binom{k-1}{j}\\
&=\sum_{\ell=1}^{n-k}\sum_{j=0}^{\ell-1}\binom{k-1}{j}+\sum_{\ell=n-k+1}^{n}\sum_{j=k-1-(n-\ell)}^{\ell-1}\binom{k-1}{j}\\
&=\sum_{\ell=1}^{n-k}\sum_{j=0}^{\ell-1}\binom{k-1}{j}+\sum_{\ell=1}^{k}\sum_{j=\ell-1}^{\ell-1+(n-k)}\binom{k-1}{j}\\
&=\sum_{\ell=1}^{n-k}\sum_{j=0}^{\ell-1}\binom{k-1}{j}+\sum_{\ell=1}^{k}\sum_{j=0}^{n-k}\binom{k-1}{j+\ell-1}\\
&=\sum_{\ell=1}^{n-k}\left(\sum_{j=0}^{\ell-1}\binom{k-1}{j}+\sum_{j=1}^{k}\binom{k-1}{j+\ell-1}\right)+\sum_{\ell=1}^{k}\binom{k-1}{\ell-1}\\
&=\sum_{\ell=1}^{n-k}2^{k-1}+2^{k-1}\\
&=(n-k+1)2^{k-1}.
\end{align*}
It follows that
\begin{align*}
\mathcal{L}(F_{n+1})&=\sum_{k=0}^{n}(n-k)!\mathcal{L}_k(P_n)\\&=n!+\sum_{k=1}^{n}(n-k+1)!2^{k-1}\\&=n!+\sum_{k=0}^{n-1}(n-k)!2^k.
\end{align*}
Notice that $$\sum_{k=0}^{n-1}(n-k)!2^{k}=\genfrac<>{0pt}{1}{n}{1},$$ where $\genfrac<>{0pt}{1}{n}{k}$ stands for the Eulerian number (e.g., \cite[pp.~253--258]{Gr} and \seqnum{A000295}). Furthermore, the expression for $\mathcal{L}_k(P_n)$
provides an additional combinatorial interpretation to \seqnum{A130128}.
\end{enumerate}
\end{example}

\subsection{Two graphs connected by a bridge}

In this section, we calculate the number of random walk labelings of graphs that are formed by connecting two graphs with an edge, called bridge.

Recall that the $n$-barbell graph, denoted by $B_n$, is the graph obtained by joining two complete graphs $K_n$ with a bridge (see, for example, \cite[p.~344]{G}. Figure \ref{fig:3011} below visualizes $B_8$). Generalizing, let us denote by $B_{m,n}$ the graph obtained by joining two complete graphs $K_m$ and $K_n$ with a bridge.

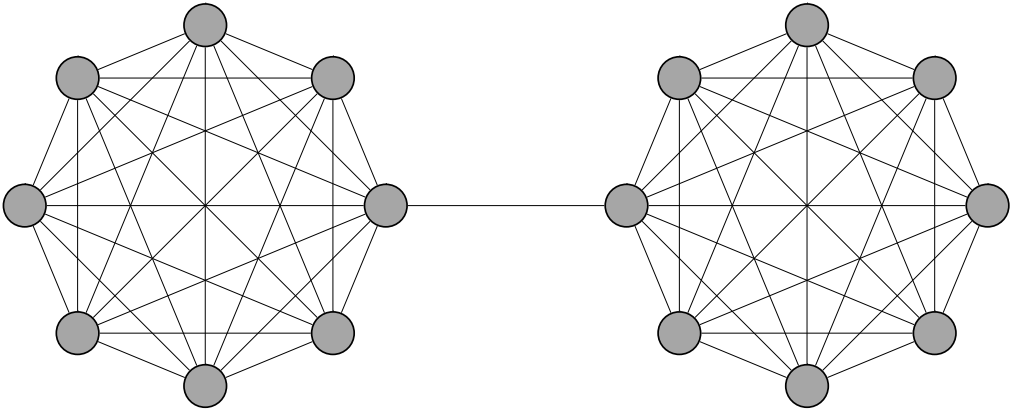
\begin{figure}[H]
\centering
\scalebox{0.8}{
\begin{tikzpicture}[shape = circle, node distance=4cm and 7cm,  nodes={ circle,fill=black!35}]
  \foreach \x in {1,...,8}{%
    \pgfmathparse{(\x-1)*360/8}
    \node[draw,circle,inner sep=0.25cm] (N-\x) at (\pgfmathresult:3cm) [thick] {};
  }
  \foreach \x in {1,...,8}{%
    \foreach \y in {\x,...,8}{%
        \path (N-\x) edge[ultra thin,-] (N-\y);
  }
  }
\begin{scope}[shift={(10,0)}]
  \foreach \x in {1,...,8}{%
    \pgfmathparse{(\x-1)*360/8}
    \node[draw,circle,inner sep=0.25cm] (O-\x) at (\pgfmathresult:3cm) [thick] {};
  }
  \foreach \x in {1,...,8}{%
    \foreach \y in {\x,...,8}{%
        \path (O-\x) edge[ultra thin,-] (O-\y);
  }
  }\end{scope}
  \path (N-1) edge[ultra thin,-] (O-5);
\end{tikzpicture}}
\caption{The barbell graph $B_{8}$.}\label{fig:3011}
\end{figure} \noindent

In the proof of the following theorem and also in several other places in this work, we shall make use of a well-known combinatorial identity (e.g., \cite[(1.48)]{Gou}), namely \begin{equation}\label{eq;0010}
\sum_{k=0}^n\binom{k+x}{r}=\binom{n+x+1}{r+1}-\binom{x}{r+1}, \end{equation}
that holds true for every real $x$ and every nonnegative integer $r$.

\begin{theorem}\label{thm;5ah}
We have $$\mathcal{L}(B_{m,n})=(m-1)!(n-1)!\left(\binom{m+n}{n+1}+\binom{m+n}{m+1}\right).$$
\end{theorem}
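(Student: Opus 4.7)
The plan is to recast random walk labelings as combinatorial objects and count them directly. A random walk labeling of $B_{m,n}$ is encoded by the ordering $v_1,\ldots,v_{m+n}$ in which its vertices receive their labels, and a short induction on $i$ (using the fact that the walker can always detour through the already-labeled vertices to reach any neighbor of the next unlabeled vertex) shows that the orderings that arise are precisely those whose every prefix $\{v_1,\ldots,v_i\}$ induces a connected subgraph of $B_{m,n}$. It therefore suffices to count such \emph{prefix-connected} orderings.

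Denote by $u\in K_m$ and $v\in K_n$ the endpoints of the bridge. By the symmetry that swaps the two cliques, I would count prefix-connected orderings with $v_1\in K_m$ and then add the analogous contribution from $v_1\in K_n$. Two structural observations drive the count: (i) $u$ (resp.\ $v$) is the only vertex of $K_m$ (resp.\ $K_n$) adjacent to the other clique, so the first $K_n$-vertex to be labeled must be $v$, and $u$ must appear among its predecessors; (ii) once both $u$ and $v$ have been labeled, every unlabeled vertex of $K_m$ is adjacent to $u$ and every unlabeled vertex of $K_n$ is adjacent to $v$, so prefix-connectedness is automatic thereafter and the remaining vertices may appear in any order.

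Parameterizing by $a\in\{1,\ldots,m\}$, the number of $K_m$-vertices that appear before the first $K_n$-vertex, observations (i)--(ii) give that the first $a$ labels form an ordered sequence of $a$ distinct $K_m$-vertices containing $u$ (contributing $a!\binom{m-1}{a-1}$ choices), the $(a+1)$-th label is forced to be $v$, and the final $m+n-a-1$ labels may appear in any of $(m+n-a-1)!$ orders. Summing over $a$ and substituting $j=m-a$ yields
\[
\sum_{a=1}^{m} a!\binom{m-1}{a-1}(m+n-a-1)! \;=\; (m-1)!\,(n-1)!\sum_{j=0}^{m-1}(m-j)\binom{n+j-1}{n-1}.
\]

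The remaining step, which I expect to be the only non-routine piece of the proof, is to evaluate the inner sum. Rewriting $m-j=\sum_{i=j+1}^{m}1$ and interchanging the order of summation turns it into $\sum_{i=1}^{m}\sum_{j=0}^{i-1}\binom{n+j-1}{n-1}$, and two successive applications of the hockey-stick identity~(\ref{eq;0010}) collapse this to $\binom{m+n}{n+1}$. This yields the first summand of the theorem; swapping $m$ and $n$ throughout gives the second summand, completing the proof.
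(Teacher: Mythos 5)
Your proposal is correct and follows essentially the same route as the paper: a direct enumeration that conditions on when the bridge is crossed, reduces to a binomial sum, and collapses it via the hockey-stick identity (\ref{eq;0010}), with the two clique roles handled by symmetry. The only difference is cosmetic --- the paper tracks the label $k$ of $u$ and the gap $\ell$ until $v$ is labeled separately (a double sum), whereas you merge these into the single parameter $a=k+\ell$, which slightly streamlines the algebra.
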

\begin{proof}
Suppose that the random walk labeling begins at the left complete graph, which we assume to be of size $m$, and suppose that the label of the left bridge vertex is $k$, where $1\leq k\leq m$. Let $\ell$ stand for the number of vertices that are labeled after the left bridge vertex is labeled and before the right bridge vertex is labeled. Thus, $0 \leq \ell \leq m-k$. The number of such random walk labelings is given by
\begin{align*}
&\sum_{k=1}^{m}(k-1)!\binom{m-1}{k-1}\sum_{\ell=0}^{m-k}\binom{m-k}{\ell}\ell!\binom{n-1+m-k-\ell}{m-1}(m-1)!(n-k-\ell)!\\
&=(m-1)!(n-1)!\sum_{k=1}^{m}\sum_{\ell=0}^{m-k}\binom{n-1+m-k-\ell}{n-1}\\&=(m-1)!(n-1)!\sum_{k=1}^{m}\binom{m-k+n}{n}\\&=(m-1)!(n-1)!\binom{m+n}{n+1}.
\end{align*} Interchanging the roles of $m$ and $n$ and adding both expressions, we obtain the claim.
\end{proof}

\begin{corollary}
We have $$\mathcal{L}(B_n)=2(n-1)!n!C_n,$$ where $C_n$ stands for the $n$th Catalan number.
\end{corollary}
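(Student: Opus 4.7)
The plan is simply to specialize Theorem \ref{thm;5ah} to $m=n$ and rewrite the resulting binomial coefficient in terms of a Catalan number. First I would substitute $m=n$ in the formula
$$\mathcal{L}(B_{m,n})=(m-1)!(n-1)!\left(\binom{m+n}{n+1}+\binom{m+n}{m+1}\right),$$
noting that the two binomials coincide, so the factor becomes $2\binom{2n}{n+1}$ multiplied by $((n-1)!)^2$.

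Next I would convert $\binom{2n}{n+1}$ to a Catalan expression using the elementary identity $\binom{2n}{n+1} = \frac{n}{n+1}\binom{2n}{n}$ combined with the definition $C_n = \frac{1}{n+1}\binom{2n}{n}$, which gives $\binom{2n}{n+1} = n\,C_n$. Substituting back yields
$$\mathcal{L}(B_n)=((n-1)!)^2\cdot 2nC_n = 2(n-1)!\,n!\,C_n,$$
as claimed. There is essentially no obstacle here: the argument is a one-line algebraic specialization of the previous theorem, and the only ``step'' of substance is recognizing $\binom{2n}{n+1}=nC_n$.
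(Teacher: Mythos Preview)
Your argument is correct and is exactly the intended derivation: the paper states this corollary without proof, as an immediate specialization of Theorem~\ref{thm;5ah} with $m=n$, using $\binom{2n}{n+1}=nC_n$.
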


Recall that the $(m,n)$-lollipop graph, denoted by $L_{m,n}$, is the graph obtained by joining a complete graph $K_m$ to a path graph $P_n$ with a bridge (see, for example, \cite[p.~344]{G}. Figure \ref{fig:300} below visualizes $L_{8,5}$).

\begin{figure}[H]
\centering
\scalebox{0.8}{
\begin{tikzpicture}[shape = circle, node distance=4cm and 7cm,  nodes={ circle,fill=black!35}]
  \foreach \x in {1,...,8}{%
    \pgfmathparse{(\x-1)*360/8}
    \node[draw,circle,inner sep=0.25cm] (N-\x) at (\pgfmathresult:3cm) [thick] {};
  }
  \foreach \x in {1,...,8}{%
    \foreach \y in {\x,...,8}{%
        \path (N-\x) edge[ultra thin,-] (N-\y);
  }
  }
\node[draw,circle,inner sep=0.25cm] (l1) at (5cm,0) [thick] {};
\node[draw,circle,inner sep=0.25cm] (l2) at (7cm,0) [thick] {};
\node[draw,circle,inner sep=0.25cm] (l3) at (9cm,0) [thick] {};
\node[draw,circle,inner sep=0.25cm] (l4) at (11cm,0) [thick] {};
\node[draw,circle,inner sep=0.25cm] (l5) at (13cm,0) [thick] {};
\path (N-1) edge[ultra thin,-] (l1);
\path (l1) edge[ultra thin,-] (l2);
\path (l2) edge[ultra thin,-] (l3);
\path (l3) edge[ultra thin,-] (l4);
\path (l4) edge[ultra thin,-] (l5);

\end{tikzpicture}}
\caption{The lollipop graph $L_{8,5}$.}\label{fig:300}
\end{figure}
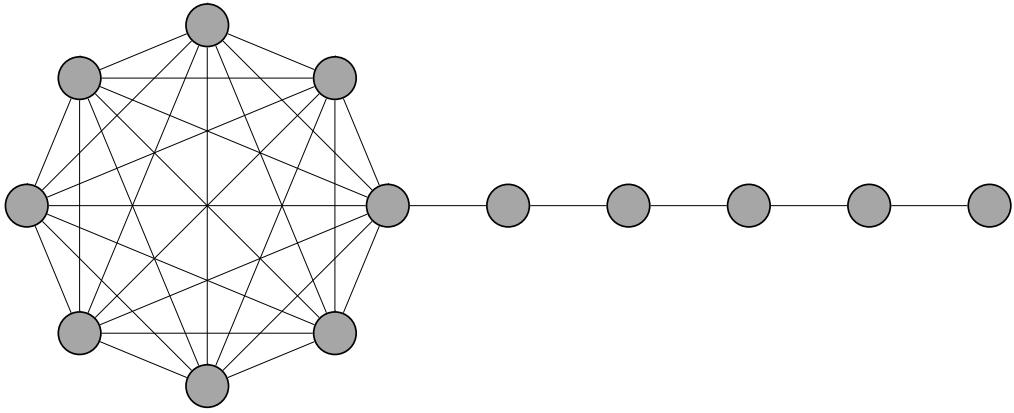 \noindent

We shall make use of the following combinatorial identity, for which we have not found a reference. Its proof is given in Section \ref{sec;af1} (Lemma \ref{lem;552}). For nonnegative integers $m$ and $n$ we have \begin{equation}\label{eq;kka}
2^{n}\sum_{k=0}^{n}\binom{k+m}{k}\frac{1}{2^{k}}=\sum_{k=0}^{n}\binom{m+1+n}{m+1+k}.
\end{equation}

\begin{theorem}\label{thm;66}
We have $$\mathcal{L}(L_{m,n})=(m-1)!\left(\binom{m+n}{n+1}+\sum_{k=0}^{n-1}\binom{m+n-1}{k+m}\right).$$
\end{theorem}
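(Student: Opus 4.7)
The plan is to partition the random walk labelings of $L_{m,n}$ according to whether the initial vertex lies in the complete graph $K_m$ or in the path $P_n$, and count each class separately. Let the left bridge vertex denote the $K_m$-endpoint of the bridge and the right bridge vertex denote vertex~$1$ of $P_n$ (the endpoint adjacent to the bridge).

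\emph{Case 1: the walk begins in $K_m$.} Following the strategy used in the proof of Theorem~\ref{thm;5ah}, I would parameterize by $k$, the label of the left bridge vertex, and by $\ell$, the number of $K_m$-vertices labeled after the left bridge but before the right bridge vertex. The labelings of the first $k-1$ positions and of the next $\ell$ positions are any orderings of chosen subsets of the $m-1$ non-bridge vertices of $K_m$, since $K_m$ is complete; together these contribute the factor $(k-1)!\binom{m-1}{k-1}\cdot\binom{m-k}{\ell}\ell!$. After the right bridge is labeled, the $m-k-\ell$ remaining $K_m$-vertices must be interleaved with the $n-1$ remaining path vertices. The key observation is that because vertex~$1$ of $P_n$ is an endpoint, the path vertices can only be labeled in the forced increasing order $2,3,\dots,n$, while the remaining $K_m$-vertices admit any ordering; this gives the factor $\binom{n-1+m-k-\ell}{n-1}(m-k-\ell)!$. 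After simplification, applying identity~\eqref{eq;0010} once in the inner sum (over $\ell$) and once in the outer sum (over $k$) collapses this contribution to $(m-1)!\binom{m+n}{n+1}$.

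\emph{Case 2: the walk begins in $P_n$.} Let $u$ denote the label of the left bridge vertex. Since $K_m$ is only accessible through the bridge, and the left bridge's only neighbor outside $K_m$ is vertex~$1$ of $P_n$, all $u-1$ labels preceding the left bridge must be assigned to path vertices that include vertex~$1$ and form a connected subgraph; this forces them to exhaust exactly the prefix $\{1,2,\dots,u-1\}$ of $P_n$. Consequently the number of valid labelings of the first $u-1$ positions is precisely $\mathcal{L}(P_{u-1})=2^{u-2}$, irrespective of the starting vertex within this prefix. After the left bridge is labeled at position $u$, the remaining $n-u+1$ path vertices must again be labeled in the forced increasing order, freely interleaved with any of the $(m-1)!$ orderings of the $m-1$ remaining $K_m$-vertices; this yields the factor $(m-1)!\binom{m+n-u}{m-1}$. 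Summing over $u\in\{2,\dots,n+1\}$ and re-indexing by $k=u-2$ gives the Case~$2$ contribution
\[
(m-1)!\sum_{k=0}^{n-1}2^{k}\binom{m+n-k-2}{m-1}.
\]

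To finish, I would reverse the summation via $k\mapsto n-1-k$, rewriting the Case~$2$ sum as $(m-1)!\cdot 2^{n-1}\sum_{k=0}^{n-1}\binom{m-1+k}{k}2^{-k}$, and then apply identity~\eqref{eq;kka} with the parameters $m-1$ and $n-1$ in place of $m$ and $n$ to identify it with $(m-1)!\sum_{k=0}^{n-1}\binom{m+n-1}{m+k}$. Adding the contributions of the two cases yields the claimed formula. The main obstacle is Case~$2$: one must recognize that the initial segment before the left bridge is always a full path prefix, so that the entire pre-bridge count collapses to $\mathcal{L}(P_{u-1})=2^{u-2}$ independently of both the starting vertex and the walker's location at the moment of crossing, and then spot that the resulting weighted binomial sum is precisely the right-hand side of identity~\eqref{eq;kka} after re-indexing.
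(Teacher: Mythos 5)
Your proposal is correct and follows essentially the same route as the paper: split by whether the walk starts in $K_m$ or in $P_n$, sum over the label of the bridge vertex $\alpha$, use that the pre-bridge path labelings number $2^{u-2}$ and that the remaining path vertices are labeled in forced order interleaved with the free $K_m$ orderings, and finish with identities \eqref{eq;0010} and \eqref{eq;kka}. The only cosmetic difference is your extra parameter $\ell$ in Case 1 (borrowed from the barbell argument of Theorem \ref{thm;5ah}), which the paper avoids by absorbing it into the single interleaving coefficient $\binom{m+n-k}{m-k}$; both collapse to $(m-1)!\binom{m+n}{n+1}$.
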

\begin{proof}
Let us refer to the bridge vertex belonging to the complete graph by $\alpha$. Let $1\leq k\leq m$ and assume that the random walk labeling begins at a vertex belonging to the complete graph and suppose that $\alpha$ has the label $k$. The number of such random walk labelings is given by $$
\binom{m-1}{k-1}(k-1)!(m-k)!\binom{m+n-k}{m-k}=(m-1)!\binom{m+n-k}{m-k}.$$ Summing over $k$, we conclude that the number of random walk labelings that begin at a vertex belonging to the complete graph is given by
$$(m-1)!\sum_{k=1}^m\binom{m+n-k}{m-k}=(m-1)!\binom{m+n}{n+1}.$$
Assume now that the random walk labeling begins at a vertex belonging to the path graph, such that $\alpha$ has the label $k$, where $2\leq k\leq n+1$. The number of such random walk labelings is given by
$$(m-1)!\binom{m+n-k}{m-1}\sum_{\ell=2}^{k}\binom{k-2}{\ell-2}=(m-1)!\binom{m+n-k}{m-1}2^{k-2}.$$ Summing over $k$ and using (\ref{eq;kka}), we conclude that the number of random walk labelings that start at a vertex belonging to the path graph is given by
\begin{align*}
\sum_{k=2}^{n+1}\binom{m+n-k}{m-1}2^{k-2}=2^{n-1}\sum_{k=0}^{n-1}\binom{k+m-1}{k}\frac{1}{2^k}&=\sum_{k=0}^{n-1}\binom{m+n-1}{k+m}. \qedhere
\end{align*}
\end{proof}

Recall that the $(m,n)$-tadpole graph, denoted by $T_{m,n}$, is the graph obtained by joining a cycle graph $C_m$ to a path graph $P_n$ with a bridge (see, for example, \cite[p.~18]{G}. Figure \ref{fig:4500} below visualizes $T_{8,5}$).

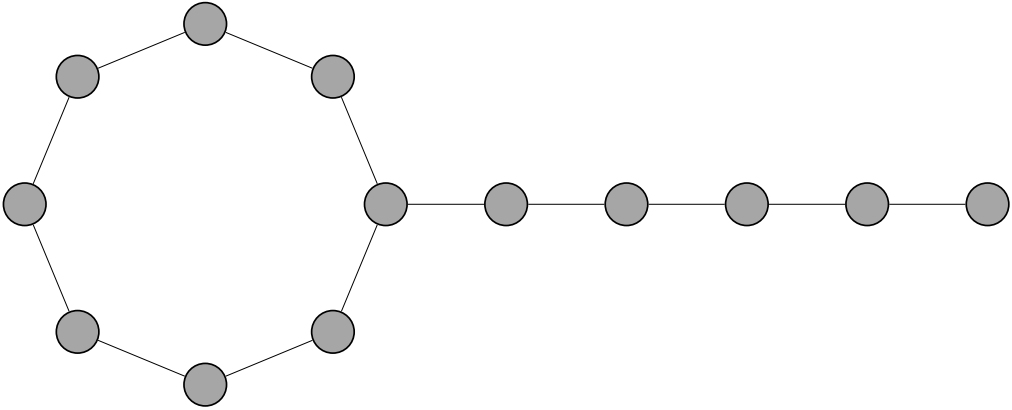
\begin{figure}[H]
\centering
\scalebox{0.8}{
\begin{tikzpicture}[shape = circle, node distance=4cm and 7cm,  nodes={ circle,fill=black!35}]
  \foreach \x in {1,...,8}{%
    \pgfmathparse{(\x-1)*360/8}
    \node[draw,circle,inner sep=0.25cm] (N-\x) at (\pgfmathresult:3cm) [thick] {};
  }

\path (N-1) edge[ultra thin,-] (N-2);
\path (N-2) edge[ultra thin,-] (N-3);
\path (N-3) edge[ultra thin,-] (N-4);
\path (N-4) edge[ultra thin,-] (N-5);
\path (N-5) edge[ultra thin,-] (N-6);
\path (N-6) edge[ultra thin,-] (N-7);
\path (N-7) edge[ultra thin,-] (N-8);
\path (N-8) edge[ultra thin,-] (N-1);

\node[draw,circle,inner sep=0.25cm] (l1) at (5cm,0) [thick] {};
\node[draw,circle,inner sep=0.25cm] (l2) at (7cm,0) [thick] {};
\node[draw,circle,inner sep=0.25cm] (l3) at (9cm,0) [thick] {};
\node[draw,circle,inner sep=0.25cm] (l4) at (11cm,0) [thick] {};
\node[draw,circle,inner sep=0.25cm] (l5) at (13cm,0) [thick] {};
\path (N-1) edge[ultra thin,-] (l1);
\path (l1) edge[ultra thin,-] (l2);
\path (l2) edge[ultra thin,-] (l3);
\path (l3) edge[ultra thin,-] (l4);
\path (l4) edge[ultra thin,-] (l5);

\end{tikzpicture}}
\caption{The tadpole graph $T_{8,5}$.}\label{fig:4500}
\end{figure} \noindent

\begin{theorem}\label{thm;7}
We have $$\mathcal{L}(T_{m,n})=2^{m-2}\left(\binom{m+n}{n+1}+\sum_{k=0}^{n-1}\binom{m+n-1}{k+m}\right).$$
\end{theorem}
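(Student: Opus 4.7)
The plan is to mirror the two-case analysis used in the proof of Theorem \ref{thm;66}, replacing the factor $(m-1)!$ (which came from random walk labelings of $K_m$) by the cycle-graph analogue $2^{m-2}$. Throughout, let $\alpha$ denote the bridge vertex belonging to $C_m$, let $\beta = p_1$ denote the bridge vertex belonging to $P_n$, and let $k$ denote the label received by $\alpha$.

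\textbf{Case 1: the walk starts in $C_m$, so $1 \leq k \leq m$.} In this case the first $k$ labels are all in $C_m$ and end with $\alpha$, while the remaining $m-k$ cycle vertices are labeled after $\alpha$ and interleave freely with the $n$ path vertices (which, as in the lollipop argument, are forced to be labeled in the order $p_1,p_2,\ldots,p_n$ from $\beta$ outward). I would exploit the cyclic symmetry of $C_m$ to avoid a case analysis: since the rotation group $\mathbb{Z}_m$ acts freely on random walk labelings of $C_m$ and $\mathcal{L}(C_m) = m\cdot 2^{m-2}$, the number of random walk labelings of $C_m$ with $\alpha$ at any prescribed position $k$ is $\mathcal{L}(C_m)/m = 2^{m-2}$. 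Multiplying by the $\binom{m-k+n}{n}$ interleavings of post-$\alpha$ cycle labels with the path labels and applying the hockey-stick identity (\ref{eq;0010}) yields the Case~1 contribution
\[
2^{m-2}\sum_{k=1}^{m}\binom{m-k+n}{n}=2^{m-2}\binom{m+n}{n+1}.
\]

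\textbf{Case 2: the walk starts in $P_n$, so $2 \leq k \leq n+1$.} The path portion is handled exactly as in the lollipop proof: the first $k-1$ labels must fill the sub-path $\{p_1,\ldots,p_{k-1}\}$, contributing $2^{k-2}$ orderings. After $\alpha$ is labeled, the $m-1$ remaining cycle vertices form an unlabeled arc whose two endpoints are both adjacent to labeled vertices, so each extension offers a binary ``left/right'' choice except the final one, where the two options coincide, yielding $2^{m-2}$ orderings; these interleave with the forced completion of the path in $\binom{m+n-k}{m-1}$ ways. Summing and applying identity (\ref{eq;kka}) (with the substitutions $m\to m-1$, $n\to n-1$) converts $2^{m-2}\sum_{k=2}^{n+1}2^{k-2}\binom{m+n-k}{m-1}$ into $2^{m-2}\sum_{k=0}^{n-1}\binom{m+n-1}{k+m}$. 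Adding the Case~1 and Case~2 contributions gives the theorem.

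The step I expect to be most delicate is justifying the ``$2^{m-2}$'' counts on the cycle side in both cases: one must verify that from any labeled vertex the walk can always reach either endpoint of the labeled region by traversing already-labeled vertices before making a genuine extension, so that the left/right choices at each new cycle label are truly independent, and that the two options collapse only when a single unlabeled cycle vertex remains. Once this is in place, the rest of the proof is essentially mechanical, combining the two binomial identities (\ref{eq;0010}) and (\ref{eq;kka}) already used elsewhere in the paper.
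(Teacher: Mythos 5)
Your proposal is correct and follows essentially the same route as the paper: decompose by which side the walk starts on and by the label $k$ of the cycle's bridge vertex $\alpha$, count the forced interleavings with the path, and finish with identities (\ref{eq;0010}) and (\ref{eq;kka}). The only (valid) variation is in Case 1, where you obtain the count $2^{m-2}$ of cycle labelings with $\alpha$ in position $k$ by an orbit/symmetry argument from $\mathcal{L}(C_m)=m\cdot 2^{m-2}$, whereas the paper computes it directly as $2\cdot 2^{k-2}\cdot 2^{m-k-1}$ (splitting off $k=1$); both give the same summand.
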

\begin{proof}
Let us refer to the bridge vertex belonging to the cycle graph by $\alpha$ and let $1\leq k\leq m$. Assume that the random walk labeling begins at a vertex belonging to the cycle graph and suppose that $\alpha$ has the label $k$. If $k=1$ there are
$$2^{m-2}\binom{m+n-1}{n}$$ such random walk labelings and, if $k\geq 2$, there are $$2\cdot 2^{k-2}\binom{m+n-k}{n}2^{m-k-1}=2^{m-2}\binom{m+n-k}{n}$$ such random walk labelings. Suppose now that the random walk labelings begins at a vertex belonging to the path graph, such that the label of $\alpha$ is $k$, where $2\leq k\leq n+1$. There are $$2^{k-2}\binom{m+n-k}{m-1}2^{m-2}$$ such random walk labelings. Summing over $k$, we obtain the claim.
\end{proof}

\subsection{Connecting cycle graphs}

In this section, we consider two families of graphs that are obtained by connecting cycle graphs. The first is the one-point union of $m$ cycle graphs $C_n$, denoted by $C_n^{(m)}$, which, in the special case of $n=3$, coincides with the friendship graph $F_m$ (e.g, \cite[p.~18]{G}. Figure \ref{fig;133} below visualizes $F_4$).

\begin{figure}[H]
\centering
\scalebox{0.8}{
\begin{tikzpicture}[shape = circle, node distance=4cm and 7cm,  nodes={ circle,fill=black!35}]
\begin{scope}[rotate=22.5,transform shape]
  \foreach \x in {1,...,8}{%
    \pgfmathparse{(\x-1)*360/8}
    \node[draw,circle,inner sep=0.25cm] (N-\x) at (\pgfmathresult:3cm) [thick] {};
  }
  \node[draw,circle,inner sep=0.25cm] (0,0) (mid) {};
  \path (N-1) edge[ultra thin,-] (mid);
  \path (N-2) edge[ultra thin,-] (mid);
  \path (N-2) edge[ultra thin,-] (N-3);
  \path (N-4) edge[ultra thin,-] (mid);
  \path (N-3) edge[ultra thin,-] (mid);
  \path (N-4) edge[ultra thin,-] (N-5);
  \path (N-5) edge[ultra thin,-] (mid);
  \path (N-6) edge[ultra thin,-] (N-7);
  \path (N-6) edge[ultra thin,-] (mid);
  \path (N-7) edge[ultra thin,-] (mid);
  \path (N-8) edge[ultra thin,-] (N-1);
  \path (N-8) edge[ultra thin,-] (mid);

\end{scope}
\end{tikzpicture}}
\caption{The friendship graph $F_4$.}\label{fig;133}
\end{figure}
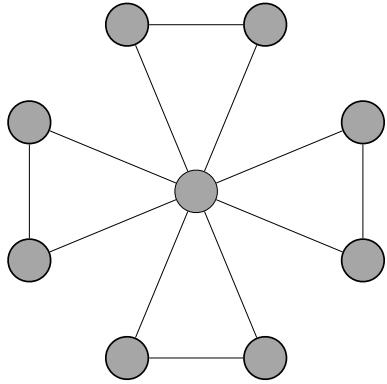 \noindent

\begin{theorem}\label{thm;kji}
We have $$\mathcal{L}(C_n^{(m)})=\left(2^{n-2}\right)^{m}\binom{(m-1)(n-1)}{\underbrace{n-1,\ldots,n-1}_{m-1\text{ times}}}\left(\binom{m(n-1)}{n-1}+m\binom{m(n-1)}{n-2}\right).$$
\end{theorem}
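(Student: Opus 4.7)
The plan is to partition random walk labelings of $C_n^{(m)}$ according to the first labeled vertex: either the central vertex $\alpha$ (Case~1) or one of the $m(n-1)$ non-central vertices (Case~2). In Case~2 the $m$ starting cycles are interchangeable by the symmetry of the construction, so I will fix the starting cycle (WLOG cycle $1$) and multiply by $m$ at the end.

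In Case~1, after $\alpha$ is labeled the walker can only move between cycles via $\alpha$; hence for each cycle $i$, the sub-sequence of first-visits to its $n-1$ non-central vertices is exactly the non-central part of a random walk labeling of $C_n$ starting at $\alpha$. By Example~(a), such a labeling has $2^{n-2}$ possibilities, and the $m$ per-cycle sub-sequences interleave freely within the post-$\alpha$ portion of the overall labeling in $\binom{m(n-1)}{n-1,\ldots,n-1}$ ways. Factoring this multinomial as $\binom{m(n-1)}{n-1}\binom{(m-1)(n-1)}{n-1,\ldots,n-1}$ isolates the $\binom{m(n-1)}{n-1}$ summand in the theorem and gives a Case~1 count of $(2^{n-2})^m\binom{(m-1)(n-1)}{n-1,\ldots,n-1}\binom{m(n-1)}{n-1}$.

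In Case~2 (starting in cycle $1$), I augment cycle $1$'s sub-sequence by inserting $\alpha$ at its labeling position to form a length-$n$ sequence $\tau_1$; this $\tau_1$ is itself a random walk labeling of $C_n$ starting at a non-central vertex, and I let $k\in\{2,\ldots,n\}$ denote the position of $\alpha$ in $\tau_1$. The key intermediate claim is that, for every $k\in\{1,\ldots,n\}$, exactly $2^{n-2}$ random walk labelings of $C_n$ place $\alpha$ at position $k$. This follows from the rotational symmetry of $C_n$: rotations permute the $n\cdot 2^{n-2}$ random walk labelings of $C_n$, each labeling has one vertex at position $k$, and the vertex set is a single orbit of size $n$ under the rotations, so each (vertex,\,position) pair is realized by $n\cdot 2^{n-2}/n=2^{n-2}$ labelings. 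Meanwhile, each of the remaining $m-1$ cycles contributes $2^{n-2}$ length-$(n-1)$ sub-sequences (random walk labelings of $C_n$ starting at $\alpha$, with $\alpha$ dropped), and interleaving $\tau_1$'s tail with these sub-sequences after position $k$ gives $\frac{(m(n-1)+1-k)!}{(n-k)!((n-1)!)^{m-1}}$ arrangements.

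Summing over $k=2,\ldots,n$ and reindexing by $j=n-k$ converts the $k$-dependent sum into $\sum_{j=0}^{n-2}\binom{(m-1)(n-1)+j}{j}$, which by identity~(\ref{eq;0010}) (applied with $x=r=(m-1)(n-1)$, after the symmetry $\binom{a+j}{j}=\binom{a+j}{a}$) equals $\binom{m(n-1)}{n-2}$. The Case~2 count per starting cycle thus collapses to $(2^{n-2})^m\binom{(m-1)(n-1)}{n-1,\ldots,n-1}\binom{m(n-1)}{n-2}$; multiplying by $m$ and adding to Case~1 produces the theorem. The main obstacle is the symmetry-based equidistribution claim for the position of $\alpha$ among random walk labelings of $C_n$; once that is in place, the rest is routine bookkeeping and a single application of (\ref{eq;0010}).
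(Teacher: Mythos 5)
Your proposal is correct and follows essentially the same route as the paper: the same split into labelings starting at the central vertex versus at an inner vertex of a (fixed, then symmetrized) cycle, conditioning on the label $k$ of the center, the same multinomial interleaving count, and the same hockey-stick summation via (\ref{eq;0010}). The only cosmetic difference is that you obtain the factor $2^{n-2}$ for the starting cycle by a rotational-equidistribution argument for the position of $\alpha$ in a random walk labeling of $C_n$, whereas the paper counts it directly as $2\cdot 2^{k-2}\cdot 2^{n-k-1}$; both are valid.
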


\begin{proof}
Clearly, there are \begin{equation}\label{eq;iuo}\left(2^{n-2}\right)^{m}\binom{m(n-1)}{\underbrace{n-1,\ldots, n-1}_{m\text{ times}}}\end{equation} random walk labelings that start at the middle vertex. Assume now that the random walk labeling starts at an inner vertex belonging to one of the $m$ $n$-cycles and that the middle vertex has the label $k$, where $2\leq k\leq n$. There are
$$2\cdot 2^{k-2}\binom{m(n-1)-(k-1)}{n-k,\underbrace{n-1,\ldots,n-1}_{m-1\text{ times}}}\left(2^{n-2}\right)^{m-1}2^{n-k-1}$$ such random walk labelings. Summing this over $k$, multiplying by $m$, and adding to (\ref{eq;iuo}), we obtain the claim.
\end{proof}

\begin{corollary}
    We have $$\mathcal{L}(F_m)=\frac{(4m-1)(2m)!}{2m-1}.$$
\end{corollary}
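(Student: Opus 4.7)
The plan is a direct specialization of Theorem \ref{thm;kji} to $n=3$, since by definition $F_m = C_3^{(m)}$. First I would substitute $n=3$ into the formula, which turns the prefactor $(2^{n-2})^m$ into $2^m$, and converts the multinomial coefficient $\binom{(m-1)(n-1)}{n-1,\ldots,n-1}$ (with $m-1$ entries) into $\binom{2(m-1)}{2,2,\ldots,2}$ with $m-1$ twos; this simplifies to $(2m-2)!/2^{m-1}$, so the prefactor together with the multinomial collapses to $2\cdot(2m-2)!$.

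Next I would simplify the bracketed factor $\binom{m(n-1)}{n-1}+m\binom{m(n-1)}{n-2}$, which becomes $\binom{2m}{2}+m\binom{2m}{1}=m(2m-1)+2m^2=m(4m-1)$. Multiplying everything together gives
\[
\mathcal{L}(F_m)=2\,(2m-2)!\cdot m(4m-1)=2m(4m-1)(2m-2)!.
\]
Finally, to match the form in the statement, I would rewrite $2m(2m-2)!=\dfrac{(2m)(2m-1)(2m-2)!}{2m-1}=\dfrac{(2m)!}{2m-1}$, yielding
\[
\mathcal{L}(F_m)=\frac{(4m-1)(2m)!}{2m-1},
\]
as claimed.

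There is no real obstacle here; the proof is a routine evaluation of the general formula from Theorem \ref{thm;kji}. The only point that requires a moment of care is correctly counting the entries of the multinomial coefficient (there are $m-1$ twos summing to $2(m-1)$, not $m$), and keeping track of the factor $2^{m-1}$ it produces against the external $2^m$.
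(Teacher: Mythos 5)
Your proof is correct and is exactly the intended argument: the paper states this corollary without proof as an immediate specialization of Theorem \ref{thm;kji} to $n=3$, and your evaluation of the prefactor, the multinomial $\binom{2m-2}{2,\ldots,2}=(2m-2)!/2^{m-1}$, and the bracket $m(4m-1)$ all check out, as does the final rewriting $2m(2m-2)!=(2m)!/(2m-1)$.
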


The second family of graphs we consider consists of graphs resulting from the following process: Begin with a path graph $P_{n+1}$. Now, replace each edge of the path with an edge of a cycle graph $C_m$. We denote the resulting graph by $S_{m, n}$. If $m=3$, then $S_{m, n}$ is known as a triangular snake (e.g., \cite[p.~19]{G}. Figure \ref{fig:206} below visualizes $S_{3,5}$).
\begin{figure}[H]
\centering
\scalebox{0.8}{
\begin{tikzpicture}[shape = circle, node distance=4cm and 7cm,  nodes={ circle,fill=black!35}]
\node[draw,circle,inner sep=0.25cm] at (-3, 0)   (10) {};
\node[draw,circle,inner sep=0.25cm] at (-1, 0)   (20) {};
\node[draw,circle,inner sep=0.25cm] at (1, 0)   (30) {};
\node[draw,circle,inner sep=0.25cm] at (3, 0)   (40) {};
\node[draw,circle,inner sep=0.25cm] at (5, 0)   (50) {};
    \node[draw,circle,inner sep=0.25cm] at (-4, -2)   (2) {};
    \node[draw,circle,inner sep=0.25cm] at (-2, -2)   (3) {};
    \node[draw,circle,inner sep=0.25cm] at (0, -2)   (4) {};
    \node[draw,circle,inner sep=0.25cm] at (2, -2)   (5) {};
    \node[draw,circle,inner sep=0.25cm] at (4, -2)   (6) {};
    \node[draw,circle,inner sep=0.25cm] at (6, -2)   (7) {};

\path (10) edge[ultra thin,-] (2);
\path (10) edge[ultra thin,-] (3);
\path (20) edge[ultra thin,-] (4);
\path (20) edge[ultra thin,-] (3);
\path (30) edge[ultra thin,-] (4);
\path (30) edge[ultra thin,-] (5);
\path (40) edge[ultra thin,-] (5);
\path (40) edge[ultra thin,-] (6);
\path (50) edge[ultra thin,-] (6);
\path (50) edge[ultra thin,-] (7);

\path (2) edge[ultra thin,-] (3);
\path (3) edge[ultra thin,-] (4);
\path (4) edge[ultra thin,-] (5);
\path (5) edge[ultra thin,-] (6);
\path (6) edge[ultra thin,-] (7);
\end{tikzpicture}}
\caption{The triangular snake $S_{3,5}$.}\label{fig:206}
\end{figure}
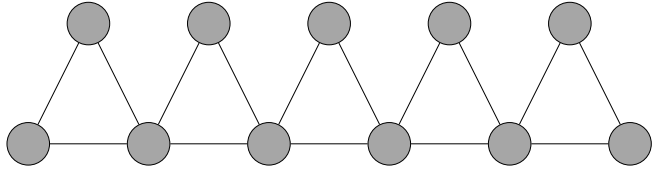 \noindent

\begin{theorem}\label{thm;iop}
Let $t=m-1$. Then
\begin{align*}
\mathcal{L}&(S_{m,n})\\&=\sum_{k=0}^{n-1}\sum_{j=2}^t\sum_{r=0}^{t-j-1}\sum_{s=0}^{kt}\binom{r+s}{s}\binom{nt-s-j-r}{kt-s,t-j-r,(n-1-k)t}2^{t-r-2}b_{m,k}b_{m,n-1-k}\\&+	
\sum_{k=0}^{n-1}\sum_{j=2}^t2^{j-1}\sum_{s=0}^{kt}\binom{t-j+s}{s}\binom{(n-1)t-s}{kt-s}b_{m,k}b_{m,n-1-k}\\&+	
\sum_{k=0}^{n}\binom{nt}{kt}b_{m,k}b_{m,n-k},	
\end{align*}
where $b_{m,n}$ stands for the number of random walk labelings of $S_{m,n}$, that start at one of the ends of the path graph and is recursively given by
$$b_{m,n} = \begin{cases}
1,&n=0;\\
2^{m-2},&n=1;\\
b_{m, n-1}\left(1+\sum_{j=2}^t\binom{nt+1-j}{m-j}2^{t-j}\right),& n>1.
\end{cases}$$
\end{theorem}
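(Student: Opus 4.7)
The plan is to classify the random walk labelings of $S_{m,n}$ by the location of the starting vertex (a spine vertex, or an interior vertex of some cycle) and in each case to decompose a labeling as an interleaving of cycle-level and sub-snake-level random walk labelings. Throughout I rely on the structural observation that the restriction of any random walk labeling of $S_{m,n}$ to a single cycle is itself a random walk labeling of $C_m$: interior cycle vertices have all their neighbors in the cycle, and a spine vertex is necessarily first visited from within the cycle currently inhabited by the walk.

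If the walk starts at a spine vertex $v_k$ with $0\leq k\leq n$, then labeling $v_k$ first splits $S_{m,n}\setminus\{v_k\}$ into the left sub-snake $S_{m,k}$ and the right sub-snake $S_{m,n-k}$, each rooted at $v_k$ as an endpoint of its spine path. The remaining $nt$ labels split between the two sides in $\binom{nt}{kt}$ ways, and the two sides independently contribute $b_{m,k}$ and $b_{m,n-k}$ labelings. Summing over $k$ gives the third sum.

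If the walk starts at an interior vertex of some cycle, let $\alpha$ and $\beta$ be the two spine vertices of that cycle, let $\alpha$ be the one labeled first, let $k$ be the number of cycles in the sub-snake attached to $\alpha$, and let $p_\alpha<p_\beta$ denote the cycle-labels of $\alpha$ and $\beta$. Set $j=p_\alpha$ and, in the range $p_\beta<m$, $r=p_\beta-p_\alpha-1$; let $s$ count the sub-snake-of-size-$k$ labels placed between the overall positions of $\alpha$ and $\beta$. Then the remaining labels break into three ordered streams (cycle; $k$-sub-snake; $(n-1-k)$-sub-snake) whose interleavings are counted by $\binom{r+s}{s}$ in the $\alpha$-to-$\beta$ window and by the multinomial $\binom{nt-s-j-r}{kt-s,\,t-j-r,\,(n-1-k)t}$ afterwards, with the sub-snake labelings contributing $b_{m,k}\,b_{m,n-1-k}$. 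A careful enumeration of the cycle orderings of $C_m$ compatible with the prescribed $(j,r)$, summed over the starting interior vertex and absorbing the choice of which of $\alpha,\beta$ is the first-labeled spine (which interchanges the two sub-snakes and is encoded in the sum over $k$), yields $2^{t-r-2}$ when $p_\beta<m$ (the first sum) and $2^{j-1}$ when $p_\beta=m$ (the second sum, in which the multinomial slot $t-j-r$ collapses since $m-p_\beta=0$). This $2$-power count---tracing through starting inner vertex, the L/R extension sequence of the cycle walk, and the $\alpha/\beta$ swap---is the main obstacle of the proof.

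For the recursion defining $b_{m,n}$, I would condition on the cycle-$1$ label $j\in\{2,\dots,m\}$ of $v_1$. For $2\leq j\leq t$, the connected-prefix property forces the first $j-1$ cycle-$1$ labels to be $v_0,w_1,\dots,w_{j-2}$ in that order, after which the walker returns to $v_0$ and crosses the direct edge to $v_1$; the $m-j$ remaining interior vertices of cycle $1$ then form a sub-path with both endpoints adjacent to the labeled set and admit $2^{m-j-1}$ orderings, which interleave with the $(n-1)t$ rest-of-snake labels in $\binom{nt+1-j}{m-j}$ ways. For $j=m$ the walker marches through the full inner path to $v_1$, contributing the standalone $1$. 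In each subcase the remainder is labeled by a random walk labeling of $S_{m,n-1}$ starting at $v_1$, contributing the factor $b_{m,n-1}$.
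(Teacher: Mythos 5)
Your plan follows the paper's proof almost exactly: the same classification by starting location, the same three-way split of $S_{m,n}$ at $\alpha$ and $\beta$ into two sub-snakes and the cycle, and the same parameters $j$ (cycle-label of $\alpha$), $r$ (interior cycle vertices labeled between $\alpha$ and $\beta$), $s$, and $k$, producing the interleaving factors $\binom{r+s}{s}$ and the multinomial. The only substantive issue is the step you flag as ``the main obstacle'': the powers of $2$. This is not actually an obstacle, and you should not leave it asserted --- it factors cleanly into three independent pieces, which is how the paper does it. Before $\alpha$ is labeled, the labeled interior vertices form a connected arc grown from the start, and since $\alpha$ becomes reachable only when its interior neighbour is labeled, the first $j-1$ labeled cycle vertices are forced to be the arc of $j-1$ interior vertices adjacent to $\alpha$, in one of $2^{j-2}$ connected-growth orders (start position plus left/right extensions). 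The $r$ interior vertices labeled between $\alpha$ and $\beta$ can only extend that arc away from $\alpha$, so their order is forced (factor $1$). After $\beta$ is labeled, the remaining $t-j-r$ interior vertices form a path both of whose ends are adjacent to labeled vertices, giving $2^{t-j-r-1}$ orders when $r\leq t-j-1$ and $1$ when $r=t-j$. Together with the factor $2$ for which spine vertex of the cycle is labeled first, this yields $2\cdot 2^{j-2}\cdot 2^{t-j-r-1}=2^{t-r-2}$ and $2\cdot 2^{j-2}=2^{j-1}$, exactly as you claimed. One genuine plus of your write-up: your derivation of the recursion for $b_{m,n}$ (conditioning on the cycle-$1$ label $j$ of $v_1$, with the forced prefix $v_0,w_1,\dots,w_{j-2}$, the $2^{t-j}$ orderings of the leftover interior path, and the $\binom{nt+1-j}{m-j}$ interleavings) is correct and goes beyond the paper, which states the recursion without proof.
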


\begin{proof}
We distinguish between two cases:
\begin{enumerate}
\item The random walk labeling begins at the $k$th vertex of the path graph, where $1\leq k\leq n+1$. There are  $$\sum_{k=1}^{n+1}\binom{nt}{(k-1)t}b_{m,k-1}b_{m,n+1-k}=\sum_{k=0}^{n}\binom{nt}{kt}b_{m,k}b_{m,n-k}$$ such random walk labelings.
\item The random walk labeling begins at an inner vertex of the $k$th cycle, where $1\leq k\leq n$. Let us denote this cycle by $C$ and the left and right vertices of $C$, that lie on the path, by $\alpha$ and $\beta$, respectively. Notice that $\alpha$ and $\beta$ divide $S_{m,n}$ into three subgraphs, that are isomorphic to $S_{m,k-1}, C_n$ and $S_{m,n-k}$. Assume that $\alpha$ is labeled before $\beta$. Due to symmetry, we have a multiplicative factor of $2$. Let $j$ be the label of $\alpha$. Thus, $2\leq j\leq m-1$. At this stage, $j$ vertices of $C$ are labeled, yielding a multiplicative factor of $2^{j-2}$. Assume now that when $y$ is labeled, exactly $r$ additional vertices of $C$ and $s$ additional vertices of $S_{m, k-1}$ are already labeled, where $0\leq r\leq t-j$ and $0\leq s\leq (k-1)t$. This gives a multiplicative factor of $\binom{r+s}{s}$. Proceeding, we are now labeling $\beta$. At this stage, the vertices of the three subgraphs may be labeled. This gives a multiplicative factor of $$\binom{(k-1)t-s+t-j-r+(n-k)t}{(k-1)t-s,t-j-r,(n-k)t}=\binom{nt-s-j-r}{(k-1)t-s,t-j-r,(n-k)t}.$$ The random walk labelings of $S_{m,k-1}$ and $S_{m,n-k}$ necessarily begin at one of the ends of the path graph (relative to the subgraphs). This gives a multiplicative factor of $b_{m,k-1}b_{m,n-k}$. Finally, we consider the remaining arc of $C$. At this stage, $\beta$ is already labeled. Thus, if $0\leq r\leq t-j-1$, then we have a multiplicative factor of $2^{t-j-1-r}$. If $r=t-j$, we have a multiplicative factor of $1$.
\end{enumerate}
\end{proof}
\begin{corollary}
We have $$\mathcal{L}(S_{3,n}) = 2^{n}(n-1)!\left(\sum_{k=0}^{n-1}\frac{\binom{2n-1}{2k}}{\binom{n-1}{k}}+n\sum_{k=0}^{n}\frac{\binom{2n}{2k}}{\binom{n}{k}}\right).$$
\end{corollary}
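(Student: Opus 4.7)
The plan is to specialize Theorem \ref{thm;iop} to $m=3$ (so $t=m-1=2$) and simplify. First, I would inspect the three multi-sums in the statement of Theorem \ref{thm;iop} for this choice of parameters. In the first triple sum, the outer index forces $j=2$, and then the index $r$ runs over $0\leq r\leq t-j-1=-1$, which is empty; hence the entire first contribution vanishes. In the second sum, again only $j=2$ survives, which makes $2^{j-1}=2$ and $\binom{t-j+s}{s}=\binom{s}{s}=1$. The inner sum thus collapses to $\sum_{s=0}^{2k}\binom{2n-2-s}{2k-s}$, and after the substitution $u=2k-s$ this is $\sum_{u=0}^{2k}\binom{2(n-1-k)+u}{u}$, which identity (\ref{eq;0010}) (with $x=0$, $r=2(n-1-k)$) evaluates to $\binom{2n-1}{2(n-1-k)+1}=\binom{2n-1}{2k}$. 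The third sum needs no simplification. After these reductions,
\[
\mathcal{L}(S_{3,n}) \;=\; 2\sum_{k=0}^{n-1}\binom{2n-1}{2k}b_{3,k}b_{3,n-1-k} \;+\; \sum_{k=0}^{n}\binom{2n}{2k}b_{3,k}b_{3,n-k}.
\]

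Next I would pin down the auxiliary sequence $b_{3,n}$. For $m=3$, $t=2$, the recurrence reads $b_{3,n}=b_{3,n-1}\bigl(1+\binom{2n-1}{1}2^{0}\bigr)=2n\cdot b_{3,n-1}$ (for $n>1$), and this continues to hold at $n=1$ since $b_{3,1}=2=2\cdot 1\cdot b_{3,0}$. Together with $b_{3,0}=1$, induction gives the closed form $b_{3,n}=2^{n}\,n!$.

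Finally, I would substitute $b_{3,k}=2^{k}k!$ into both remaining sums and pull the geometric factors outside. The first term yields $2^{n}\sum_{k=0}^{n-1}\binom{2n-1}{2k}k!(n-1-k)!$ and the second yields $2^{n}\sum_{k=0}^{n}\binom{2n}{2k}k!(n-k)!$. Using $k!(n-1-k)!=(n-1)!/\binom{n-1}{k}$ in the first sum and $k!(n-k)!=n!/\binom{n}{k}=n\cdot(n-1)!/\binom{n}{k}$ in the second, one can factor out $2^{n}(n-1)!$ to obtain the stated formula.

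The only piece that is not purely mechanical is recognising the telescoped inner sum in step one as an instance of (\ref{eq;0010}); once this identification is made, the remaining steps (solving a first-order linear recurrence and rewriting factorials via reciprocal binomial coefficients) are routine, so I do not expect any substantive obstacle.
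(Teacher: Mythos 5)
Your proposal is correct and takes essentially the same route as the paper: specialize Theorem \ref{thm;iop} to $m=3$ (the first triple sum is empty since $r$ ranges over an empty set), solve the recurrence to get $b_{3,n}=2^n n!$, evaluate the inner $s$-sum as $\binom{2n-1}{2k}$ via identity (\ref{eq;0010}), and rewrite $k!(n-1-k)!$ and $k!(n-k)!$ as $(n-1)!/\binom{n-1}{k}$ and $n!/\binom{n}{k}$. The only blemish is cosmetic: in your application of (\ref{eq;0010}) the parameters should be $x=r=2(n-1-k)$ rather than $x=0$, but the resulting value $\binom{2n-1}{2(n-1-k)+1}=\binom{2n-1}{2k}$ is exactly right.
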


\begin{proof}
It is not hard to see that $b_{3,n} = 2^nn!$, for $n\geq 0$.
It follows that
\begin{align*}
\mathcal{L}(S_{3,n})&=2^n\sum_{k=0}^{n-1}\sum_{s=0}^{2k}\binom{2(n-1)-s}{2k-s}k!(n-1-k)!+2^n	
\sum_{k=0}^{n}\binom{2n}{2k}k!(n-k)!\\&=2^{n}(n-1)!\sum_{k=0}^{n-1}\frac{1}{\binom{n-1}{k}}\sum_{s=0}^{2k}\binom{2(n-1)-s}{2k-s}+2^{n}n!\sum_{k=0}^{n}\frac{\binom{2n}{2k}}{\binom{n}{k}}\\&=2^{n}(n-1)!\left(\sum_{k=0}^{n-1}\frac{\binom{2n-1}{2k}}{\binom{n-1}{k}}+n\sum_{k=0}^{n}\frac{\binom{2n}{2k}}{\binom{n}{k}}\right)\qedhere
\end{align*}
\end{proof}
\begin{remark}
In \cite{Fr}, we obtained several results regarding the sequence \seqnum{A087547}. In particular, we have shown that $$\seqnum{A087547}(n)=(n-1)!\sum_{k=0}^{n-1}\frac{\binom{2n-1}{2k}}{\binom{n-1}{k}}.$$ In the next section (Theorem \ref{thm;gsa}), we shall prove another identity for this sequence.
\end{remark}

\subsection{Combinatorial identities}\label{sec;af1}

\begin{lemma}\label{lem;552}
For nonnegative integers $m$ and $n$ we have $$2^{n}\sum_{k=0}^{n}\binom{k+m}{k}\frac{1}{2^{k}}=\sum_{k=0}^{n}\binom{m+1+n}{m+1+k}.$$
\end{lemma}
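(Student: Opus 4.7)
The plan is to prove the identity by induction on $n$, with $m$ fixed and arbitrary (an equally viable alternative, which I mention below, is to show that both sides satisfy the same first-order linear recurrence in $n$).

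First, I would check the base case $n=0$: the left-hand side equals $2^0\binom{m}{0}=1$, while the right-hand side equals $\binom{m+1}{m+1}=1$. Next, assuming the identity holds for some $n\geq 0$, I would expand the $n+1$ version of the left-hand side as
\[
2^{n+1}\sum_{k=0}^{n+1}\binom{k+m}{k}\frac{1}{2^{k}}
=2\cdot 2^{n}\sum_{k=0}^{n}\binom{k+m}{k}\frac{1}{2^{k}}+\binom{n+1+m}{n+1},
\]
and apply the inductive hypothesis to rewrite the first summand as $2\sum_{k=0}^{n}\binom{m+n+1}{m+1+k}$.

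The core of the argument is then to transform the $n+1$ version of the right-hand side, $\sum_{k=0}^{n+1}\binom{m+n+2}{m+1+k}$, via Pascal's rule $\binom{m+n+2}{m+1+k}=\binom{m+n+1}{m+1+k}+\binom{m+n+1}{m+k}$, into two sums over $\binom{m+n+1}{\cdot}$. After shifting indices and discarding the vanishing boundary term $\binom{m+n+1}{m+n+2}=0$, both of these partial sums collapse to $\sum_{k=0}^{n}\binom{m+n+1}{m+1+k}$, leaving an unmatched term $\binom{m+n+1}{m}=\binom{m+n+1}{n+1}$. This is precisely the extra term produced on the left, closing the induction.

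The step most prone to bookkeeping errors is the re-indexing after Pascal's rule, so I would be careful to list explicitly what the upper and lower limits become in each subsum. As an alternative route, one can set $S_n=2^{n}\sum_{k=0}^{n}\binom{k+m}{k}2^{-k}$ and $T_n=\sum_{k=0}^{n}\binom{m+1+n}{m+1+k}$, and verify directly (using the same Pascal manipulation on $T_n$) that both satisfy $f(n)-2f(n-1)=\binom{m+n}{n}$ with $f(0)=1$; uniqueness of the solution then yields $S_n=T_n$. This is essentially the induction reorganized as a recurrence comparison, and I would prefer whichever formulation reads most cleanly in context.
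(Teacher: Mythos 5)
Your argument is correct, but it is genuinely different from the paper's. You prove the identity by induction on $n$ (equivalently, by showing that both sides satisfy $f(n)=2f(n-1)+\binom{m+n}{n}$ with $f(0)=1$): the left side obviously satisfies this recurrence since pulling out the factor $2$ leaves only the new term $\binom{m+n+1}{n+1}$, and on the right side one application of Pascal's rule $\binom{m+n+2}{m+1+k}=\binom{m+n+1}{m+1+k}+\binom{m+n+1}{m+k}$, a shift of index, the vanishing of $\binom{m+n+1}{m+n+2}$, and the symmetry $\binom{m+n+1}{m}=\binom{m+n+1}{n+1}$ give exactly $2T_n+\binom{m+n+1}{n+1}$ --- I checked the bookkeeping and it closes as you claim. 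The paper instead uses the snake-oil method: it forms the bivariate generating function $\sum_{n,m}\bigl(\cdot\bigr)x^m y^n$ of each side (with the weight $(2y)^n/2^k$ on the left) and shows both equal $\frac{1}{(1-2y)(1-x-y)}$. Your route is more elementary and self-contained, needing only Pascal's rule and careful index management, and it makes transparent \emph{why} the two sides agree (a common first-order recurrence in $n$); the paper's route trades that for a short, mechanical computation that treats $m$ and $n$ symmetrically at once and exhibits the closed-form generating function, which can be reused for related identities. Either proof would be acceptable here.
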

\begin{proof}
We apply the snake-oil method (e.g., \cite[pp.~118--130]{W}). On one hand, we have
\begin{align*}	
\sum_{n\geq0}\sum_{m\geq0}\sum_{k=0}^{n}\frac{(2y)^{n}}{2^{k}}\binom{k+m}{k}x^{m}	&=\sum_{n\geq0}\sum_{k=0}^{n}\frac{(2y)^{n}}{2^{k}}\frac{1}{(1-x)^{k+1}}\\
&=\sum_{k\geq0}\frac{1}{2^{k}(1-x)^{k+1}}\sum_{n\geq k}(2y)^{n}\\
&=\sum_{k\geq0}\frac{1}{2^{k}(1-x)^{k+1}}\frac{(2y)^{k}}{1-2y}\\
&=\frac{1}{(1-2y)(1-x)}\sum_{k\geq0}\left(\frac{y}{1-x}\right)^{k}\\
&=\frac{1}{(1-2y)(1-x)}\frac{1}{1-\frac{y}{1-x}}\\
&=\frac{1}{(1-2y)(1-x-y)}.
\end{align*}
On the other hand, we have
\begin{align*}
\sum_{n\geq0}\sum_{m\geq0}\sum_{k=0}^{n}\binom{m+1+n}{m+1+k}y^{n}x^{m}	&=\sum_{k\geq0}\sum_{m\geq0}x^{m}\frac{1}{y^{m+1}}\sum_{n\geq k}\binom{m+1+n}{m+1+k}y^{m+1+n}\\
&=\sum_{k\geq0}\sum_{m\geq0}x^{m}\frac{1}{y^{m+1}}\frac{y^{m+1+k}}{(1-y)^{m+2+k}}\\
&=\frac{1}{(1-y)^{2}}\left(\sum_{m\geq0}\left(\frac{x}{1-y}\right)^{m}\right)\left(\sum_{k\geq0}\left(\frac{y}{1-y}\right)^{k}\right)\\
&=\frac{1}{(1-y)^{2}}\frac{1}{1-\frac{x}{1-y}}\frac{1}{1-\frac{y}{1-y}}\\
&=\frac{1}{(1-2y)(1-x-y)}.	\qedhere
\end{align*}
\end{proof}

\begin{theorem}\label{thm;gsa}
    We have \begin{equation}\label{eq;41}\seqnum{A087547}(n)=\frac{(n-1)!}{2}\sum_{k=0}^{n-1}\frac{\binom{2n}{2k+1}}{\binom{n-1}{k}}.\end{equation}
\end{theorem}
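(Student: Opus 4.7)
The plan is to reduce the identity to a clean symmetry statement about the summands, using the already-established formula from the Remark preceding the theorem:
$$\seqnum{A087547}(n)=(n-1)!\sum_{k=0}^{n-1}\frac{\binom{2n-1}{2k}}{\binom{n-1}{k}}.$$
After cancelling the common factor $(n-1)!/2$, what I need to establish is
$$2\sum_{k=0}^{n-1}\frac{\binom{2n-1}{2k}}{\binom{n-1}{k}}=\sum_{k=0}^{n-1}\frac{\binom{2n}{2k+1}}{\binom{n-1}{k}}.$$

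The first step is to split each upper binomial in the right-hand sum via Pascal's rule $\binom{2n}{2k+1}=\binom{2n-1}{2k}+\binom{2n-1}{2k+1}$. This converts the right-hand side into
$$\sum_{k=0}^{n-1}\frac{\binom{2n-1}{2k}}{\binom{n-1}{k}}+\sum_{k=0}^{n-1}\frac{\binom{2n-1}{2k+1}}{\binom{n-1}{k}},$$
so the identity is equivalent to the claim that the even-indexed and odd-indexed sums coincide:
$$\sum_{k=0}^{n-1}\frac{\binom{2n-1}{2k}}{\binom{n-1}{k}}=\sum_{k=0}^{n-1}\frac{\binom{2n-1}{2k+1}}{\binom{n-1}{k}}.$$

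The second step is to prove this by the reflection $k\mapsto n-1-k$. Under this substitution, $2k+1\mapsto 2n-1-2k$, and the symmetries $\binom{2n-1}{2n-1-2k}=\binom{2n-1}{2k}$ and $\binom{n-1}{n-1-k}=\binom{n-1}{k}$ transform the right-hand sum term-by-term into the left-hand one. This closes the argument.

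There is no significant obstacle; the computation is elementary once the key observation is made that the odd-indexed sum is the reflection of the even-indexed one. The only thing to double-check is that the index range $0\le k\le n-1$ is preserved under $k\mapsto n-1-k$, which it is. Thus the whole proof fits in a few lines: invoke the previously derived expression for $\seqnum{A087547}(n)$, apply Pascal's rule to $\binom{2n}{2k+1}$, and finish with the reflection symmetry.
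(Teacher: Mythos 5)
Your proof is correct, but it takes a genuinely different route from the paper. The paper proves the identity analytically: writing $\frac{1}{\binom{n-1}{k}}$ as a Beta integral, it computes the generating function $\sum_{n\geq1}\frac{b_n}{(n-1)!}x^n$ in closed form (an expression involving $\arctan\bigl(\frac{x}{\sqrt{1-2x}}\bigr)$ and $\sqrt{1-2x}$) and then identifies it with the generating function obtained in the proof of Theorem 8 of \cite{Fr}, concluding $a_n=b_n$. You instead start from the explicit formula $\seqnum{A087547}(n)=(n-1)!\sum_{k=0}^{n-1}\binom{2n-1}{2k}/\binom{n-1}{k}$ quoted in the Remark (also a result of \cite{Fr}), apply Pascal's rule $\binom{2n}{2k+1}=\binom{2n-1}{2k}+\binom{2n-1}{2k+1}$, and finish with the reflection $k\mapsto n-1-k$, which indeed maps the odd-indexed sum term-by-term onto the even-indexed one since $\binom{2n-1}{2n-1-2k}=\binom{2n-1}{2k}$ and $\binom{n-1}{n-1-k}=\binom{n-1}{k}$, with the index range preserved. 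Both arguments lean on prior work from \cite{Fr}, but in different forms: the paper reuses a generating function, while you reuse the closed-form identity; your route is shorter, purely elementary, and avoids integrals and power series entirely, whereas the paper's computation has the side benefit of exhibiting the closed-form generating function of the sequence.
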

\begin{proof}
Denote the left- (resp.\ right) hand side of (\ref{eq;41}) by  $a_n$ (resp.\ $b_n$). We have
\begin{align*}
\sum_{n\geq1}\frac{b_n}{(n-1)!}x^n&=\frac{1}{2}\sum_{n\geq1}\sum_{k=0}^{n-1}\int_0^1n\binom{2n}{2k+1}t^k(1-t)^{n-1-k}x^ndt\\
&=\frac{1}{2}\int_0^1\sum_{n\geq1}\sum_{k\geq0}(n+k)\binom{2n+2k}{2k+1}t^k(1-t)^{n-1}x^{n+k}dt\\
&=-\int_0^1\frac{x(4t^2x^2-4tx^2+x^2-1)}{(4t^2x^2-4tx^2+(1-x)^2)^2}dt\\
&=\frac{x\left((1-x)\arctan\left(\frac{x}{\sqrt{1-2x}}\right)+\sqrt{1-2x}\right)}{(1-x)\left(\sqrt{1-2x}\right)^3}.
\end{align*}
It follows from the proof of Theorem 8 in \cite{Fr} that $a_n=b_n$.
\end{proof}

\begin{theorem}\label{thm;ggd}
    Let $f(x)$ be the egf of the sequence $$a_n = 2^n\sum_{k=0}^n\frac{k!}{2^{k}}.$$ Then $$f(x)=2e^{2x-2}\left(\int_0^1\frac{e^{2t}-e^{2(1-x)t}}{t}dt-\ln(1-x)\right)+\frac{1}{1-x}.$$
\end{theorem}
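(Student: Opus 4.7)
The plan is to first reduce $a_n$ to a simple first-order recurrence. Separating the $k=n$ term gives
$$a_n = 2\cdot 2^{n-1}\sum_{k=0}^{n-1}\frac{k!}{2^k} + n! = 2a_{n-1} + n!, \qquad n\geq 1,$$
with $a_0 = 1$. Multiplying by $x^n/n!$ and summing over $n\geq 1$ shows that $f$ satisfies the linear ODE
$$f'(x) - 2f(x) = \frac{1}{(1-x)^2},\qquad f(0)=1,$$
since $\sum_{n\geq 0}(a_{n+1}-2a_n)\frac{x^n}{n!} = \sum_{n\geq 0}(n+1)\,x^n = (1-x)^{-2}$.

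Next I would solve the ODE by the integrating factor $e^{-2x}$, obtaining
$$f(x) = e^{2x} + e^{2x}\int_0^x\frac{e^{-2s}}{(1-s)^2}\,ds.$$
The substitution $u = 1-s$ rewrites the integral as $e^{-2}\int_{1-x}^1 e^{2u}/u^2\,du$, so that $f(x) = e^{2x} + e^{2x-2}\int_{1-x}^1 e^{2u}/u^2\,du$. An integration by parts with $dv = du/u^2$ and $w = e^{2u}$ produces a boundary term $e^{2(1-x)}/(1-x) - e^2$; the $-e^2$ contribution exactly cancels $e^{2x}$, while $e^{2x-2}\cdot e^{2(1-x)}/(1-x) = 1/(1-x)$. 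After this cancellation,
$$f(x) = \frac{1}{1-x} + 2e^{2x-2}\int_{1-x}^1 \frac{e^{2u}}{u}\,du.$$

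The final task is to recognize the remaining integral in the form stated in the theorem, namely to prove the identity
$$\int_{1-x}^1 \frac{e^{2u}}{u}\,du = \int_0^1 \frac{e^{2t}-e^{2(1-x)t}}{t}\,dt - \ln(1-x).$$
This is the delicate step, since $\int_0^1 e^{2t}/t\,dt$ and $\int_0^1 e^{2(1-x)t}/t\,dt$ are individually divergent and only their difference is finite. I would handle it with a common cutoff $\epsilon > 0$: rescaling $v=(1-x)t$ in the second integral converts it to $\int_{\epsilon(1-x)}^{1-x} e^{2v}/v\,dv$, and splitting $\int_\epsilon^1 = \int_\epsilon^{1-x} + \int_{1-x}^1$ together with $\int_{\epsilon(1-x)}^{1-x} = \int_{\epsilon(1-x)}^\epsilon + \int_\epsilon^{1-x}$ cancels the common middle pieces, leaving
$$\int_{1-x}^1 \frac{e^{2t}}{t}\,dt - \int_{\epsilon(1-x)}^\epsilon \frac{e^{2v}}{v}\,dv.$$
As $\epsilon\to 0$ the second term tends to $\ln(1-x)$ because $e^{2v}\to 1$ and $\int_{\epsilon(1-x)}^\epsilon dv/v = -\ln(1-x)$. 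Combining this identity with the expression for $f$ yields the claimed formula. The main obstacle is the regularization argument in this last step; all earlier steps are routine manipulations with the ODE and an integration by parts.
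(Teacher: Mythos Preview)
Your argument is correct and is in fact more self-contained than the paper's. The paper observes that $f$ satisfies the second-order homogeneous equation $(x-1)f''+2(2-x)f'-4f=0$ with $f(0)=1$, $f'(0)=3$, invokes Maple to obtain $f(x)=2e^{2x-2}\bigl(\operatorname{Ei}(2x-2)-\operatorname{Ei}(-2)\bigr)+\frac{1}{1-x}$, and leaves the identification of $\operatorname{Ei}(2x-2)-\operatorname{Ei}(-2)$ with the stated integral implicit. You instead exploit the simpler first-order recurrence $a_n=2a_{n-1}+n!$ to get the inhomogeneous first-order ODE $f'-2f=(1-x)^{-2}$, solve it by hand via an integrating factor and one integration by parts, and then explicitly verify the Frullani-type identity linking $\int_{1-x}^{1}e^{2u}/u\,du$ to the integral in the theorem. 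Your route avoids both the computer algebra system and the exponential-integral function entirely, and actually closes the gap between the ODE solution and the formula as stated.

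One cosmetic slip: in the regularization step you write that ``the second term tends to $\ln(1-x)$'' while in the same sentence noting (correctly) that $\int_{\epsilon(1-x)}^{\epsilon}dv/v=-\ln(1-x)$. The integral itself tends to $-\ln(1-x)$; it is the full expression $-\int_{\epsilon(1-x)}^{\epsilon}e^{2v}/v\,dv$ that tends to $+\ln(1-x)$. Either way the identity $\int_{1-x}^{1}e^{2u}/u\,du=\int_{0}^{1}\frac{e^{2t}-e^{2(1-x)t}}{t}\,dt-\ln(1-x)$ that you use is correct, so this is only a wording issue.
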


\begin{proof}
It is not hard to verify that $f(x)$ satisfies the homogeneous second order ordinary differential equation $$(x-1)f''(x)+2(-x+2)f'(x)+(-4)f(x)=0,$$ with initial conditions $f(0)=1$ and $f'(0)=3$. The solution of this equation is routine (e.g., \cite[ 14.1.2.108 on p.\ 531]{P}). Using Maple, we see that $$f(x)=2e^{2x-2}\left(\text{Ei}(2x-2)-\text{Ei}(-2)\right)+\frac{1}{1-x},$$  where $$\textnormal{Ei}(x) = \int_{-\infty}^x\frac{e^t}{t}dt$$ (e.g., \cite[p.\ 351]{Sp}).
\end{proof}

\end{document}